\newtheorem{theorem}{Theorem}
\newtheorem{lemma}[theorem]{Lemma}
\newtheorem{corollary}[theorem]{Corollary}
\theoremstyle{definition}
\newtheorem{example}[theorem]{Example}
\theoremstyle{remark}
\newtheorem{remark}[theorem]{Remark}
\newtheorem{problem}[theorem]{Problem}
\numberwithin{equation}{section}
\DeclareMathAlphabet{\matheur}{U}{eur}{m}{n}
\newcommand{\R}{\mathbb{R}}
\newcommand{\Q}{\mathbb{Q}}
\newcommand{\N}{\mathbb{N}}
\newcommand{\Z}{\mathbb{Z}}
\newcommand{\lcm}{\mathrm{lcm}}
\newcommand{\ord}{\mathrm{ord}}
\renewcommand\d{{\mathrm d}}
\mathchardef\pFcomma=\mathcode`, 
\renewcommand{\d}{\mathrm d}
\begin{document}
\title[A dynamical system proof of Niven's theorem and its extensions]{A dynamical system proof of Niven's theorem and its extensions}

\author{Chatchawan Panraksa}
\address{Applied Mathematics Program, Mahidol University International College, Nakhon Pathom, Thailand 73170} \email{chatchawan.pan@mahidol.edu}

\author{Detchat Samart}
\address{Department of Mathematics, Faculty of Science, Burapha University, Chonburi, Thailand 20131} \email{petesamart@gmail.com}

\author{Songpon Sriwongsa}
\address{Department of Mathematics, Faculty of Science, King Mongkut's University of Technology Thonburi, Bangkok, Thailand 10140} \email{songpon.sri@kmutt.ac.th}


\subjclass[2020]{37C25, 11C08, 11R04, 33B10}

\date{\today}

\maketitle

\begin{abstract}
Niven's theorem asserts that $\{\cos(r\pi) \mid r\in \Q\}\cap \Q=\{0,\pm 1,\pm 1/2\}.$ In this paper, we use elementary techniques and results from arithmetic dynamics to obtain an algorithm for classifying all values in the set $\{\cos(r\pi) \mid r\in \Q\}\cap K$, where $K$ is an arbitrary number field.
\end{abstract}

\section{Introduction}\label{S:intro}
Finding exact values of trigonometric functions is a classical problem. For the sake of simplicity, we will mainly focus on cosine; analogous results for other trigonometric functions can be deduced using simple identities like 
\begin{align*}
\sin(\theta)&= \cos\left(\frac{\pi}{2}-\theta\right),\\\tan^2(\theta)&=\frac{1-\cos(2\theta)}{1+\cos(2\theta)}.
\end{align*}
 Common trigonometric values which are usually covered in introductory trigonometry lessons include 
\[\cos(0)=1,\quad \cos\left(\frac{\pi}{6}\right)=\frac{\sqrt{3}}{2},\quad \cos\left(\frac{\pi}{4}\right)=\frac{\sqrt{2}}{2},\quad \cos\left(\frac{\pi}{3}\right)=\frac{1}{2},\quad \cos\left(\frac{\pi}{2}\right)=0.\]
There are many other exact values of cosine which are not as common as the above values such as 
\[\cos\left(\frac{\pi}{12}\right)=\frac{\sqrt{6}+\sqrt{2}}{4},\quad \cos\left(\frac{\pi}{5}\right)=\frac{1+\sqrt{5}}{4}.\]
Observe that all of these values are expressible using only arithmetic operations and square roots, so they are algebraic numbers. In fact, by suitably applying de Moivre's formula, it can be seen that for any $r\in \Q$, $\cos(r\pi)$ is an algebraic number. (See, for example, \cite{Jahnel}). On the other hand, if $r$ is an {\it irrational algebraic number}, then it follows from the Gelfond-Schneider theorem \cite[Thm.~10.1]{Niven} that $\cos(r\pi)$ is transcendental. It is therefore an interesting problem to explicitly determine values of $\cos(r\pi)$ when $r$ is rational. Since algebraic number fields; i.e., finite field extensions of $\Q$, constitute the field of algebraic numbers, this problem can also be rephrased as follows.
\begin{problem}\label{Pr:main}
Given a number field $K$, find all elements in $K$ which are values of cosine at a rational multiple of $\pi$.
\end{problem}
By the Abel-Ruffini theorem, if $K$ is a number field of degree higher than four, it might not be possible to write an element of $K$ in a closed form. By ``explicitly determining'' an algebraic number $\alpha$, we generally refer to finding the minimal polynomial of $\alpha$. It should be remarked that Problem~\ref{Pr:main} is neither new nor open; its complete solution, in some sense, has been known for some time now. (See a remark after Theorem~\ref{T:Lehmer} below.) A prime example is the following theorem \cite[Cor.~3.12]{Niven}, which corresponds to $K=\Q$.
\begin{theorem}[Niven's theorem]\label{T:Niven}
If $r$ and $\cos(r\pi)$ are both rational, then $\cos(r\pi) \in \{0,\pm 1, \pm 1/2\}$.
\end{theorem}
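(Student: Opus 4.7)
The plan is to recast the double-angle identity $\cos(2\theta) = 2\cos^2(\theta) - 1$ as a one-variable dynamical system. Setting $\phi(x) = 2x^2 - 1$, one obtains the key semi-conjugacy $\phi^n(\cos\theta) = \cos(2^n\theta)$. Writing $r = a/b$ with $a, b \in \ZZ$, the sequence $2^n a \pmod{2b}$ is eventually periodic, so the forward orbit $\{\cos(2^n r \pi)\}_{n \ge 0}$ takes only finitely many values. Hence $y := \cos(r\pi)$ is a preperiodic point of $\phi$, and under the hypothesis of the theorem, $y \in \QQ$.

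Since $\phi$ is not monic over $\ZZ$, my next step would be the linear change of coordinates $z = 2y$, which intertwines $\phi$ with $\psi(x) = x^2 - 2$ via the relation $\psi(2y) = 2\phi(y)$. Consequently $z = 2\cos(r\pi)$ is a rational preperiodic point of the monic integer polynomial $\psi$, and the problem reduces to classifying $\QQ$-preperiodic points of $\psi$.

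The heart of the argument, and the place where I expect the only real work, is proving that every rational preperiodic point of $\psi(x) = x^2 - 2$ lies in $\ZZ$. The natural approach is to track denominators: if $z = p/q$ in lowest terms with $q > 1$, then $\psi(z) = (p^2 - 2q^2)/q^2$, and a short coprimality check (using $\gcd(p,q) = 1$ to conclude $\gcd(p^2 - 2q^2, q) = 1$) shows this fraction is again in lowest terms. Iterating, $\psi^n(z)$ has denominator $q^{2^n}$, which contradicts finiteness of the orbit. This denominator-growth step is a baby case of Northcott's theorem and is precisely where the dynamical viewpoint earns its keep; the rest is bookkeeping.

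Once integrality is established, the proof finishes at once: the trivial bound $|z| = |2\cos(r\pi)| \le 2$ forces $z \in \{-2, -1, 0, 1, 2\}$, and translating back yields $\cos(r\pi) \in \{0, \pm 1/2, \pm 1\}$, as claimed. This template, dynamics on $\psi(x) = x^2 - 2$ plus denominator estimates, is precisely what I expect to generalize in the sequel to handle $\cos(r\pi) \in K$ for arbitrary number fields $K$.
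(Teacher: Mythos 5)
Your proof is correct. It follows the same dynamical template the paper sets up in the introduction and in Theorem~\ref{T:CK}: conjugate the doubling map on cosines to $f(x)=x^2-2$, observe that $2\cos(r\pi)$ is a preperiodic point because $2^n a \bmod 2b$ is eventually periodic, and then classify the rational preperiodic points. Where you diverge is in the finishing step. The paper's systematic route (Theorem~\ref{T:deg} specialized to $D=1$, applied in the first example of Section~\ref{S:ex}) first pins down the rational \emph{periodic} points as the fixed points, i.e.\ the roots of $f(x)-x=(x+1)(x-2)$, and then enumerates their finitely many rational preimages under $f$; integrality of $C(K)$ is obtained separately via a monic annihilating polynomial $f^{(l+m)}(x)-f^{(m)}(x)$. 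You instead prove integrality directly by the denominator-doubling argument ($q\mapsto q^{2^n}$ contradicts finiteness of the orbit) and then simply list the integers in $[-2,2]$. Your route is more self-contained for $K=\Q$ -- it needs neither the dynatomic-polynomial machinery behind Theorem~\ref{T:deg} nor any preimage computation -- and it is essentially the elementary argument the paper attributes to \cite{Jahnel,PV2021b,PV2021a}. The trade-off is that the interval bound $|z|\le 2$ plus integrality only suffices over $\Q$ (and, with the annihilating-polynomial version, gives $C(K)\subseteq\mathcal{O}_K$ in general), whereas the paper's periodic-point/preimage method is the one that scales to arbitrary number fields, which is the actual goal of the sequel. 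One small point of rigor: your coprimality check shows $\gcd(p^2-2q^2,q)=1$, and you should note this also gives $\gcd(p^2-2q^2,q^2)=1$, so the reduced denominator is exactly $q^2$ at each step; with that said, the induction is airtight.
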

Elementary proofs of Theorem~\ref{T:Niven} are given in \cite{Jahnel,PV2021b,PV2021a}. The next result can be seen as an extension of Niven's theorem to quadratic number fields.
\begin{theorem}\label{T:quadratic}
Let $r\in \Q$. If $\cos(r\pi)$ is a quadratic irrationality, then 
\[\cos(r\pi)\in \left\{\pm \frac{\sqrt{2}}{2},\pm \frac{\sqrt{3}}{2}, \frac{\pm1\pm\sqrt{5}}{4}\right\}.\]
\end{theorem}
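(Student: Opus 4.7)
The plan is to reduce Theorem~\ref{T:quadratic} to cyclotomic theory together with a short finite case check. Writing $r = p/q$ in lowest terms and setting $\zeta = e^{i\pi r}$, we have $2\cos(r\pi) = \zeta + \zeta^{-1}$, where $\zeta$ is a primitive $n$-th root of unity with $n = q$ when $p$ is even (which forces $q$ odd) and $n = 2q$ when $p$ is odd. Since the hypothesis rules out rational values, we may assume $n \geq 3$, so $\cos(r\pi)$ lies in the maximal totally real subfield $\Q(\zeta+\zeta^{-1})$ of the cyclotomic field $\Q(\zeta)$.

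Next I would invoke the classical fact that $[\Q(\zeta):\Q] = \varphi(n)$, together with the observation that complex conjugation is a nontrivial $\Q$-automorphism of $\Q(\zeta)$ for $n \geq 3$ whose fixed field is exactly $\Q(\zeta+\zeta^{-1})$. It follows that $[\Q(\cos(r\pi)):\Q] = \varphi(n)/2$. Requiring $\cos(r\pi)$ to be a quadratic irrational thus amounts to $\varphi(n) = 4$, and a short case analysis on the prime factorization of $n$ yields exactly $n \in \{5,8,10,12\}$.

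For each of these four values of $n$, I would enumerate the primitive $n$-th roots of unity up to the involution $\zeta \mapsto \zeta^{-1}$ and compute $\cos(r\pi)$ directly. The cases $n = 8$ and $n = 12$ give $\cos(\pi/4) = \pm\sqrt{2}/2$ and $\cos(\pi/6) = \pm\sqrt{3}/2$. For $n = 5$ and $n = 10$, the minimal polynomials of $2\cos(2\pi/5)$ and $2\cos(\pi/5)$, namely $x^2 + x - 1$ and $x^2 - x - 1$, yield the four values $(\pm 1 \pm \sqrt{5})/4$. The union over the four admissible $n$ reproduces the list stated in the theorem.

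The main obstacle is rigorously justifying the degree formula $[\Q(\cos(r\pi)):\Q] = \varphi(n)/2$. While classical, a fully self-contained proof rests on the irreducibility of the cyclotomic polynomial $\Phi_n(x)$ over $\Q$, or equivalently, on verifying that the $\varphi(n)/2$ traces $\zeta^k + \zeta^{-k}$ with $\gcd(k,n) = 1$ and $1 \leq k \leq n/2$ form a complete set of distinct Galois conjugates of $\zeta + \zeta^{-1}$. Once this input is in hand the remainder of the proof is a brief finite computation, and the list of admissible quadratic values is forced.
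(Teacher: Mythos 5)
Your argument is correct, but it takes a genuinely different route from the paper's. You reduce the statement to the classical degree formula $[\Q(\cos(r\pi)):\Q]=\varphi(n)/2$ --- that is, to Theorem~\ref{T:Lehmer} --- then solve $\varphi(n)=4$ to get $n\in\{5,8,10,12\}$ and finish with a finite check; the computations for $n=5,8,10,12$ are all accurate. This is precisely the route the paper sketches in the paragraph following Theorem~\ref{T:Lehmer} but deliberately does not pursue, and the obstacle you flag (irreducibility of the cyclotomic polynomial $\Phi_n(x)$ over $\Q$, equivalently the Galois-conjugate description of the $\zeta^k+\zeta^{-k}$) is exactly the input that Lehmer's theorem packages, so your proof is essentially the Lehmer/Jahnel-style argument the paper cites as prior work. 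The paper instead obtains Theorem~\ref{T:quadratic} from its dynamical machinery: Theorem~\ref{T:CK} identifies the values $2\cos(r\pi)$ lying in a quadratic field $K$ with $\mathrm{PrePer}(f,K)$ for $f(x)=x^2-2$, Theorem~\ref{T:deg} forces any periodic point in $K$ to satisfy $f^{(2)}(\alpha)=\alpha$, the factorization $f^{(2)}(x)-x=(x-2)(x+1)(x^2+x-1)$ pins down the cycles, and the remaining preperiodic points are found by pulling back under $f$ inside quadratic fields (the second example of Section~\ref{S:ex}). What your approach buys is brevity and independence from the dynatomic analysis of Lemma~\ref{L:ppower}; what the paper's buys is uniformity, since the identical algorithm treats every degree $D$, at the cost of trading cyclotomic irreducibility for the (also nontrivial) period-divisibility statement of Theorem~\ref{T:deg}. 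Both are complete proofs of the theorem.
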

Using values of cosine listed at the beginning of this section, one sees that the quadratic irrational values of $\cos(r\pi)$, with $r$ rational, correspond to $r\in\{\pm 1/4,\pm 1/6, \pm 1/5, \pm 2/5\}$. Jahnel \cite{Jahnel} proved Theorem~\ref{T:quadratic} using standard tools from algebraic number theory such as prime ideal decomposition and general forms of quadratic integers. We refer the reader to recent work of the second-named author \cite{Samart} for an alternative proof of this theorem which relies purely on basic notions in elementary number theory. For number fields of higher degree, we have the following general result, which is originally due to Lehmer (\cite{Lehmer},\cite[Thm.~3.9]{Niven}).
\begin{theorem}\label{T:Lehmer}
Let $m,n\in \Z$, with $n>2$, be relatively prime. Then $\cos(2\pi m/n)$ is an algebraic number of degree $\varphi(n)/2$, where $\varphi(n)$ is the Euler's totient function. 
\end{theorem}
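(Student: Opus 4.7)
The plan is to work inside the $n$-th cyclotomic field. Setting $\zeta := e^{2\pi \ii m/n}$, the elementary identity
\[
\cos\!\left(\frac{2\pi m}{n}\right) = \frac{\zeta + \zeta^{-1}}{2}
\]
places $\cos(2\pi m/n)$ inside $\Q(\zeta)$. Since $\gcd(m,n)=1$, the number $\zeta$ is a primitive $n$-th root of unity, so $\Q(\zeta)$ is the $n$-th cyclotomic field, which has degree $\varphi(n)$ over $\Q$ by the classical irreducibility of the cyclotomic polynomial $\Phi_n$ over $\Q$; I would invoke this as a standard fact.

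Next, write $F := \Q(\cos(2\pi m/n))$. The quadratic $x^2 - 2\cos(2\pi m/n)\,x + 1 \in F[x]$ has $\zeta$ as a root, showing $[\Q(\zeta):F]\leq 2$. For the reverse inequality, I would use the hypothesis $n>2$ together with $\gcd(m,n)=1$: these two conditions rule out $m/n \in \frac{1}{2}\Z$, so $\zeta$ is non-real, while $F \subseteq \R$. Hence $\zeta\notin F$, giving $[\Q(\zeta):F]=2$.

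The tower law applied to $\Q \subseteq F \subseteq \Q(\zeta)$ then yields
\[
\varphi(n) = [\Q(\zeta):\Q] = [\Q(\zeta):F]\cdot [F:\Q] = 2\cdot[F:\Q],
\]
so $[F:\Q] = \varphi(n)/2$, as desired. No step poses a genuine obstacle: once the irreducibility of $\Phi_n$ is granted, everything reduces to the tower-law bookkeeping above. The only point requiring a little attention is the non-reality of $\zeta$ in the reverse-inequality step, but this is an immediate consequence of the coprimality $\gcd(m,n)=1$ and the restriction $n>2$.
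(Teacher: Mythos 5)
Your proof is correct and complete. Note that the paper does not actually prove this theorem: it is stated as a known result attributed to Lehmer, with the proof deferred to the cited references (Lehmer's paper and Niven's book, Thm.~3.9), so there is no in-paper argument to compare against. Your route --- identifying $\cos(2\pi m/n)$ with $(\zeta+\zeta^{-1})/2$ in the $n$-th cyclotomic field, bounding $[\Q(\zeta):F]\le 2$ via the quadratic $x^2-2\cos(2\pi m/n)x+1$, and forcing equality from the non-reality of $\zeta$ when $n>2$ --- is the standard real-subfield argument and is exactly the mechanism the paper implicitly relies on later, in the remark following Theorem~\ref{T:phi}, where $\Psi_m$ is identified as the minimal polynomial of $2\cos(2\pi/m)$ of degree $\varphi(m)/2$. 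All steps check out, including the one point that needs care: $\zeta$ real would force $n\mid 2m$, hence $n\mid 2$ by coprimality, contradicting $n>2$.
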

With the help of this result, one can resolve Problem~\ref{Pr:main} for a number field $K$ of degree $D>1$ by finding all\footnote{It is known that $\varphi(n)\ge \sqrt{n/2}$ for all $n\in\N$. Hence, for any fixed $d\in \N$, there can be at most finitely many $n$ such that $\varphi(n)\mid 2D$.} $n\in \N$ for which $\varphi(n)\mid 2D$ and determining all distinct values among $\cos(2\pi m/n)$, where $m\in \{1,2,\ldots,n\}$ and $(m,n)=1$, which belong to $K$.

Proofs of main results in \cite{Jahnel},\cite{PV2021b},\cite{PV2021a}, and \cite{Samart} make use of the double-angle formula
\begin{equation}\label{E:cos}
\cos(2\theta)=2\cos^2(\theta)-1.
\end{equation}
If we define $F:\R\rightarrow \R$ by $F(x)=2\cos(x)$, then it is obvious from \eqref{E:cos} that $F(x)$ satisfies the functional equation 
\[F(2x)=(F(x))^2-2,\]
from which one can deduce that, for any nonnegative integer $k$,
\begin{equation}\label{E:iterate}
F(2^k x)=f^{(k)}(F(x)),
\end{equation}
where $f(x)=x^2-2$ and $f^{(k)}$ denotes the $k$-fold composition of $f$ with itself. By periodicity of cosine, the set $\{F(2^kr\pi)\mid k\ge 0\}$ is finite for any $r\in \Q$. Niven' s theorem can be then proven easily by iteratively applying $f(x)$ to a rational value of $F(r\pi)$ and using the fact that $F(x)\in [-2,2]$ for any $x\in \R$. Observe that this argument has dynamical flavor as it involves iteration of the rational map $f(x)=x^2-2$. Therefore, we have an intuitive conviction that these proofs can be rewritten in the language of dynamical systems. The main purpose of this article is to present a systematic approach to solving Problem~\ref{Pr:main} using ideas from arithmetic dynamics. Our first main result is the following theorem.
\begin{theorem}\label{T:CK}
Let $K$ be a number field and let $C(K):=\{2\cos(r\pi)\mid r\in \mathbb{Q}\}\cap K$. Define $f:K\rightarrow K$ by $f(x)=x^2-2$. Then we have
\[C(K) = \mathrm{PrePer}(f,K),\]
where $\mathrm{PrePer}(f,K)$ denotes the set of preperiodic points of $f$ in $K$.
\end{theorem}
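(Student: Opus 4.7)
The plan is to establish the two inclusions $C(K) \subseteq \mathrm{PrePer}(f,K)$ and $\mathrm{PrePer}(f,K) \subseteq C(K)$ separately, with the second (harder) direction relying on the two-to-one semiconjugacy $\pi(z) = z + z^{-1}$ between the squaring map $g(z) = z^2$ and our $f$.

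For the forward inclusion, given $\alpha = 2\cos(r\pi) \in K$ with $r = a/b \in \Q$, I would combine the iteration identity \eqref{E:iterate} with a pigeonhole argument on the residues $2^k a \pmod{2b}$ to conclude that the sequence $\{f^{(k)}(\alpha)\}_{k\ge 0} = \{2\cos(2^k r\pi)\}_{k\ge 0}$ takes only finitely many values. Since $f \in \Z[x]$ sends $K$ into $K$, every iterate remains in $K$, so $\alpha \in \mathrm{PrePer}(f,K)$.

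For the reverse inclusion, given $\alpha \in \mathrm{PrePer}(f,K)$, I would pick a complex root $z \in \C^*$ of the polynomial $T^2 - \alpha T + 1$, so that $\alpha = z + z^{-1}$. A short induction based on the identity $(w + w^{-1})^2 - 2 = w^2 + w^{-2}$ yields
\[
f^{(k)}(\alpha) = z^{2^k} + z^{-2^k} \qquad (k \ge 0).
\]
Since the left-hand side ranges over a finite set by preperiodicity, and for each value $\beta$ the equation $w + w^{-1} = \beta$ has at most two solutions, the orbit $\{z^{2^k} : k \ge 0\} \subset \C^*$ must be finite. Pigeonhole then produces $m > n$ with $z^{2^m} = z^{2^n}$, and as $z \ne 0$ this forces $z^{2^m - 2^n} = 1$. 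Hence $z$ is a root of unity, say $z = e^{i\pi s}$ for some $s \in \Q$, and $\alpha = z + z^{-1} = 2\cos(s\pi) \in C(K)$.

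The main conceptual obstacle is that the auxiliary variable $z$ typically does not lie in $K$ itself (it sits in a quadratic extension), so one cannot directly work with preperiodicity of squaring inside $K$. The trick that bypasses this is that the semiconjugacy $\pi$ transports the finite $f$-orbit in $K$ to a finite $g$-orbit in $\C^*$ via a map with fibers of size at most two; once this is recognized, the elementary fact that the only preperiodic points of $z \mapsto z^2$ in $\C^*$ are the roots of unity closes the argument. The remainder of the proof is routine bookkeeping.
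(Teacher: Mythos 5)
Your forward inclusion is essentially the paper's: both arguments note that the orbit $\{f^{(k)}(\alpha)\}_{k\ge 0}=\{2\cos(2^k r\pi)\}_{k\ge 0}$ lands in the finite set $\{2\cos(j\pi/b) \mid j\in\Z\}$, your pigeonhole on $2^k a \bmod 2b$ being just a more explicit phrasing of the paper's appeal to periodicity of cosine. For the reverse inclusion you take a genuinely different route, and it is correct. The paper works inside $\R$: if $|\delta|>2$ then the iterates $f^{(j)}(\delta)$ form a strictly increasing sequence exceeding $2$, so preperiodicity forces $\delta\in[-2,2]$; one then writes $\delta=2\cos(r\pi)$ for some real $r$ and extracts rationality of $r$ from the relation $2\cos(2^k r\pi)=2\cos(2^{k+l} r\pi)$. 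You instead conjugate to the squaring map via $z\mapsto z+z^{-1}$ and reduce to the fact that the preperiodic points of $z\mapsto z^2$ on $\C^*$ are exactly the roots of unity; your fiber-size-two observation correctly transports finiteness of the $f$-orbit to finiteness of the orbit $\{z^{2^k}\}$, and the step $z^{2^m}=z^{2^n}\Rightarrow z^{2^m-2^n}=1$ is fine. This is precisely the argument the paper alludes to in the remark after its proof (it is the standard proof for general Chebyshev polynomials, conjugating $P_n$ to $z\mapsto z^n$), and it has a concrete advantage: it never uses the real ordering, so it applies verbatim to a non-real element of $K$, whereas the paper's inequality $f^{(j)}(\delta)>2$ and its appeal to continuity of cosine tacitly presuppose $\delta\in\R$. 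What the paper's route buys in exchange is complete elementarity: no auxiliary quadratic extension, no roots of unity, just a monotone real sequence and the surjectivity of $2\cos$ onto $[-2,2]$.
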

Problem~\ref{Pr:main} then boils down to finding the preperiodic points of $f(x)=x^2-2$ over $K$. By Northcott's theorem \cite{Northcott}, we have that the set $\mathrm{PrePer}(f,K)$ is finite for any number field $K$. Although determining all elements of this set in general is not an easy task, there exists a procedure which allows us to compute them in a finite number of steps. More precisely, we shall prove the following result.
\begin{theorem}\label{T:deg}
Let $K$ be a number field of degree $D$ and let $\alpha\in K$ be a periodic point of $f(x)=x^2-2$ with minimal period $n$. Then we have $n\mid D$. In particular, $f^{(D)}(\alpha)=\alpha.$
\end{theorem}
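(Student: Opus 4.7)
The plan is to translate the problem into pure number theory via Theorem~\ref{T:CK}. Write $\alpha = 2\cos(2\pi m/N)$ with $\gcd(m,N)=1$ and $N \ge 1$ (always possible). The identity $f(2\cos\theta) = 2\cos(2\theta)$ yields $f^{(k)}(\alpha) = 2\cos(2\pi \cdot 2^k m/N)$, so $f^{(n)}(\alpha) = \alpha$ is equivalent to $2^n m \equiv \pm m \pmod N$ and, since $\gcd(m,N)=1$, to
\[
2^n \equiv \pm 1 \pmod{N}.
\]
The minimal period $n$ is then the least positive integer satisfying this congruence.

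Next I would rule out even $N$: for $n \ge 1$, $2^n \equiv 0 \not\equiv \pm 1 \pmod 2$, so no solution exists when $2 \mid N$; periodicity therefore forces $N$ odd. The case $N = 1$ is trivial ($\alpha = 2$, $n = 1$). For $N \ge 3$ odd, Theorem~\ref{T:Lehmer} gives $[\Q(\alpha):\Q] = \varphi(N)/2$, and this degree divides $D$. It therefore suffices to prove
\[
n \, \Big| \, \frac{\varphi(N)}{2}.
\]

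The remainder is a short group-theoretic calculation in $(\Z/N\Z)^*$. Let $d = \ord_N(2)$, which divides $\varphi(N)$. If $d$ is even, then $2^{d/2}$ is a square root of $1$ different from $1$ (otherwise $d$ would not be the order), so $2^{d/2} = -1$, giving $n = d/2$ and $n \mid \varphi(N)/2$ by Lagrange. If $d$ is odd, I would first show that $2^k \equiv -1 \pmod N$ has no solution: squaring forces $d \mid 2k$, and oddness of $d$ forces $d \mid k$, so $2^k \equiv 1$, a contradiction. Hence $n = d$. The key observation is then that $\langle 2 \rangle$ has odd order $d$, so $-1 \notin \langle 2 \rangle$; the subgroup $\langle 2 \rangle \langle -1 \rangle \subseteq (\Z/N\Z)^*$ has order $2d$, which divides $\varphi(N)$, so $n = d \mid \varphi(N)/2$.

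The main obstacle, and the only step requiring more than a routine check, is the odd-$d$ branch: naively one fears that $n = d$ (rather than $d/2$) might break divisibility by $\varphi(N)/2$, but the independent involution $-1$ supplies exactly the missing factor of $2$ in $\varphi(N)$. Chaining $n \mid \varphi(N)/2 = [\Q(\alpha):\Q] \mid D$ finishes the first assertion, and $f^{(D)}(\alpha) = \alpha$ is then immediate.
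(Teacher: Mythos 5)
Your overall strategy is sound and genuinely different from the paper's: instead of factoring the dynatomic polynomials $\Phi_{p^k,f}$ (Theorem~\ref{T:phi} and Lemma~\ref{L:ppower}) and reducing $n$ to its prime-power parts, you use Theorem~\ref{T:CK} to write $\alpha=2\cos(2\pi m/N)$ with $\gcd(m,N)=1$, identify the minimal period $n$ as the least $k$ with $2^k\equiv\pm1\pmod N$, and chain $n\mid\varphi(N)/2=[\Q(\alpha):\Q]\mid D$ via Theorem~\ref{T:Lehmer}. The reduction itself, the elimination of even $N$, and the odd-$d$ branch are all correct.

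However, the even-$d$ branch contains a genuine error. You claim that $2^{d/2}$, being a square root of $1$ different from $1$, must equal $-1$. That inference is only valid when $(\Z/N\Z)^*$ has exactly two square roots of unity, i.e.\ when $N$ is $1$, $2$, $4$, an odd prime power, or twice one. For $N=15$ one has $d=\ord_{15}2=4$ but $2^{d/2}=4\not\equiv-1\pmod{15}$; the true minimal period is $n=4=d$, not $d/2$, so the intermediate claim is false and $n\mid\varphi(N)/2$ is not established in this branch. The gap is easily repaired, and the repair makes the whole case split unnecessary: $2^k\equiv\pm1\pmod N$ holds exactly when the image of $2^k$ is trivial in the quotient group $(\Z/N\Z)^*/\{\pm1\}$, so $n$ is the order of the image of $2$ in that quotient, which has order $\varphi(N)/2$ for $N\ge3$, and Lagrange's theorem gives $n\mid\varphi(N)/2$ directly. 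With that substitution your argument becomes a complete, and in fact shorter, alternative to the paper's proof, trading the Vivaldi--Hatjispyros factorization and Lemma~\ref{L:divisor} for Lehmer's degree formula plus elementary group theory.
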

Choosing $K=\Q$ in Theorem~\ref{T:deg}, one sees that the rational periodic points of $f$ must be its fixed points. In general, the periodic points of $f$ which belong to some number field of degree $D$ are exactly the zeros of irreducible factors of the polynomial $f^{(D)}(x)-x$ whose degrees do not exceed $D$. Since all preperiodic points can be obtained from the periodic points via the inverse mappings of $f$, we can systematically compute $\mathrm{PrePer}(f,K)$ using this result. Detailed computations over number fields of degree up to five will be illustrated in Section~\ref{S:ex}. We prove Theorem~\ref{T:CK}, Theorem~\ref{T:deg}, and some other related results in Section~\ref{S:main}. In the next section, we review basic definitions and notions in arithmetic dynamics. Especially, we invoke some known results about dynamical properties of the map $f(x)=x^2-2$, which are crucial for the proof of Theorem~\ref{T:deg}. 

\section{Arithmetic dynamics of the map $f(x)=x^2-2$}\label{S:nab}
We first briefly recall some basic concepts frequently used in the study of arithmetic dynamics. For any map $g$ from a set $S$ to itself and $k\in\mathbb{N}$, we define $g^{(k)}(x):=\underbrace{g(g(\cdots g}_{k \textrm{ times}}(x))).$ We say that a point $P\in S$ is {\it periodic} with respect to $g$ if $g^{(n)}(P)=P$ for some $n\in\mathbb{N}$ and we call the smallest such $n$ the {\it minimal period} for $P$. The point $P$ is said to be {\it preperiodic} if $g^{(m)}(P)$ is periodic for some $m\in \mathbb{N}$, which is equivalent to that the {\it forward orbit} 
\[\mathcal{O}_g(P):=\{g^{(k)}(P)\mid k\ge 0\}\]
is finite. The set of preperiodic points of $g$ (in $S$) is denoted by $\mathrm{PrePer}(g,S)$.\\

If a field $K$ does not have characteristic 2, then any quadratic polynomial $g(x)=Ax^2+Bx+C\in K[x]$ with $A\neq 0$ can be transformed into $f_c(x)=x^2+c$ by changing  variables
$$f_c(x)=\varphi^{-1}\circ g\circ \varphi(x),$$
where $\varphi(x)=\dfrac{2x-B}{2A}$ and $c=\dfrac{B}{2}-\dfrac{B^2}{4}+AC.$ The study of an orbit of a rational function plays a central role in arithmetic dynamics. For example, given a number field $K$, the Morton-Silverman Uniform Boundedness Conjecture \cite{MS} asks if we can have an upper bound for the number of preperiodic points of $f(x)\in K[x]$ depending only on $[K:\mathbb{Q}]$ and $\deg f$. It is still open for the family of quadratic polynomials with $K=\mathbb{Q}$. For periods 1, 2, and 3 of $f_c(x)\in \mathbb{Q}[x]$, we can explicitly describe the relationship between the rational preperiodic points and the parameter $c$ (see \cite{WR}). In addition, it is known that $f_c(x)$ has no rational periodic points of minimal periods $4, 5$, and 6 (conditionally on a version of the Birch and Swinnerton-Dyer conjecture) (see \cite{Morton},\cite{FPS}, and \cite{Stoll}). For polynomials with integral coefficients, by using a simple divisibility argument, it can be shown that all preperiodic points have periods at most 2. For a number field $K$ and $c\in \mathcal{O}_K$, a result in \cite{CP} implies that the set of preperiodic points of $f_c(x)$ is uniformly bounded depending only on $D=[K:\mathbb{Q}]$ (see also \cite{Zhang}). However, it is still interesting to find an explicit bound for the number of preperiodic points of an integral quadratic polynomial. In this paper, we describes the preperiodic points of $f_{-2}(x)=x^2-2$ over $K$, where the base field $K$ varies. 

An important tool to study preperiodic points of a polynomial is its {\it dynatomic polynomial}.
A dynatomic polynomial is a polynomial that encodes information about the orbits of a point under iteration of a polynomial map $f(x)$. Given a polynomial $f(x)$ and a positive integer $n$, the $n^{th}$ {\it dynatomic polynomial} of $f$, denoted by $\Phi_{n,f}(x)$, is the polynomial whose roots are the points that remain fixed under iteration of $f(x)$ for exactly $n$ steps. These fixed points are also called the {\it formal $n$-periodic points}, and are solutions of the equation $f^{(n)}(x) = x$. As an analogue of cyclotomic polynomials, the $n^{th}$ dynatomic polynomial can be computed using the following formula:
$$\Phi_{n,f}(x)= \prod_{d\mid n}\left(f^{(d)}(x)-x\right)^{\mu \left(\frac{n}{d}\right)},$$
where $\mu$ is the M\"{o}bius function defined by $\mu(1)=1$ and
\[
\mu(p_1^{i_1}p_2^{i_2}\cdots p_k^{i_k})=
\begin{cases}
        (-1)^k & \text{if } i_j=1 \text{ for all } j\in \{1,2,\dots,k\}, \\
        0 & \text{if } i_j\geq 2 \text{ for some } j\in \{1,2,\dots,k\}.
    \end{cases}
\]
For example, the first few dynatomic polynomials of $f_c(x)$ are
\begin{align*}
\Phi_{1,f_c}(x)&= x^2-x+c,\\
\Phi_{2,f_c}(x)&= x^2+x+(c+1),\\
\Phi_{3,f_c}(x)&= x^6+x^5+(3c+1)x^4+(2c+1)x^3+(3c^2+3c+1)x^2+(c^2+2c+1)x\\
&\quad +(c^3+2c^2+c+1).
\end{align*}
The dynatomic polynomials are important in the study of arithmetic dynamics, in particular in the study of the arithmetic and geometric properties of the orbits of points under iteration of polynomial maps. For more details of the dynatomic polynomials, see \cite{bousch1992quelques,Morton2,Silverman}.

Vivaldi and Hatjispyros \cite[\S 5.2]{VH} explicitly described the $n^{th}$ dynatomic polynomial of $f(x)=x^2-2$ in the following theorem.
\begin{theorem}\label{T:phi}
Let $f(x)=x^2-2$ and define $\Psi_m(x+x^{-1})=\Phi_m(x)x^{-\varphi(m)/2}$, where $\Phi_m(x)$ is the $m^{\text{th}}$ cyclotomic polynomial. Then we have
\begin{equation*}
\Phi_{n,f}(x)=\prod_{d\mid n}\left(\prod_{d_1\mid 2^d-1}\Psi_{d_1}(x)\prod_{d_2\mid 2^d+1}\Psi_{d_2}(x)\right)^{\mu\left(\frac{n}{d}\right)}.
\end{equation*}

\end{theorem}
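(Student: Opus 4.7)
The plan is to exploit the Chebyshev-type semi-conjugation $x = y + y^{-1}$, under which $f(x) = x^2 - 2$ becomes the squaring map $y \mapsto y^2$: indeed $f(y + y^{-1}) = y^2 + y^{-2}$, so a trivial induction yields $f^{(d)}(y + y^{-1}) = y^{2^d} + y^{-2^d}$ for every $d \ge 1$.

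The first step is to establish the key algebraic identity
\[
f^{(d)}(x) - x \;=\; y^{-2^d}\bigl(y^{2^d-1}-1\bigr)\bigl(y^{2^d+1}-1\bigr).
\]
Multiplying $y^{2^d} + y^{-2^d} - y - y^{-1}$ by $y^{2^d}$ produces $y^{2^{d+1}} - y^{2^d+1} - y^{2^d-1} + 1$, which factors cleanly as $(y^{2^d+1}-1)(y^{2^d-1}-1)$ by grouping the first two and last two terms.

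Next, I would substitute the cyclotomic factorizations $y^{2^d\pm 1} - 1 = \prod_{k\mid 2^d\pm 1}\Phi_k(y)$ and invoke the defining relation $\Phi_k(y) = y^{\varphi(k)/2}\Psi_k(x)$ for $k \ge 3$. Since $2^d \pm 1$ are both odd, the only small cyclotomic factor entering either product is $\Phi_1(y) = y - 1$; the formal identity $(y-1)^2 = y(x - 2)$ lets one package these into $\Psi_1$ with the convention $\Psi_1(x)^2 = x - 2$. A direct exponent count based on $\sum_{k\mid N,\,k\ge 3}\varphi(k)/2 = (N-1)/2$ for odd $N$ shows that the aggregate positive power of $y$ arising from all cyclotomic factors is exactly $2^d$, which cancels the prefactor $y^{-2^d}$. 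The net outcome is the polynomial identity
\[
f^{(d)}(x) - x \;=\; \prod_{d_1\mid 2^d-1}\Psi_{d_1}(x)\prod_{d_2\mid 2^d+1}\Psi_{d_2}(x).
\]

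Finally, I would plug this factorization into the M\"obius product definition
\[
\Phi_{n,f}(x) \;=\; \prod_{d\mid n}\bigl(f^{(d)}(x) - x\bigr)^{\mu(n/d)}
\]
and read off the claimed formula. The main obstacle is the careful bookkeeping of the powers of $y$, together with the correct handling of the $\Psi_1$ factor (which is not a polynomial on its own but whose pairs combine to $x - 2$); once the factorization of $f^{(d)}(x) - x$ has been verified, the M\"obius inversion step is a formality.
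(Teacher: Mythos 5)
Your argument is correct, and it is worth noting that the paper does not prove this statement at all: Theorem~\ref{T:phi} is simply quoted from Vivaldi--Hatjispyros \cite[\S 5.2]{VH}. What you have supplied is a short self-contained derivation, and every step checks out. The semi-conjugation $f(y+y^{-1})=y^{2}+y^{-2}$ gives $f^{(d)}(y+y^{-1})=y^{2^{d}}+y^{-2^{d}}$, your factorization $y^{2^{d}}\bigl(f^{(d)}(x)-x\bigr)=(y^{2^{d}-1}-1)(y^{2^{d}+1}-1)$ is a two-line verification, and the exponent bookkeeping is right: since $2^{d}\pm1$ are odd, $\Phi_{2}$ never appears, each of the two cyclotomic products contributes one copy of $\Phi_{1}(y)=y-1$, the pair combines via $(y-1)^{2}=y(x-2)=y\,\Psi_{1}(x)^{2}$, and the total power of $y$ is $1+\tfrac{(2^{d}-1)-1}{2}+\tfrac{(2^{d}+1)-1}{2}=2^{d}$, exactly cancelling the prefactor. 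This yields $f^{(d)}(x)-x=\prod_{d_{1}\mid 2^{d}-1}\Psi_{d_{1}}(x)\prod_{d_{2}\mid 2^{d}+1}\Psi_{d_{2}}(x)$ (which one can sanity-check against the paper's factorizations for $d=1,2$), and the stated formula then follows immediately from the M\"obius-product definition of $\Phi_{n,f}$ given in Section~\ref{S:nab}. The only point you leave implicit is that $\Psi_{m}$ for $m\ge 3$ is genuinely a polynomial in $x=y+y^{-1}$ of degree $\varphi(m)/2$, which rests on the self-reciprocal property of cyclotomic polynomials; since the theorem's statement already builds this into the definition of $\Psi_{m}$, that omission is harmless.
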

\begin{remark}
It is known that $\Psi^2_{1}(x)=x-2,\Psi^2_{2}(x)=x+2,$ and, for $m>2$, $\Psi_{m}(x)$ is an irreducible polynomial with integer coefficients \cite[Lem.~3.8]{Niven}. In fact, it can be seen from a proof of Theorem~\ref{T:Lehmer} that $\Psi_{m}(x)$ is the minimal polynomial of the algebraic integer $2\cos(2\pi/m)$, so $\deg \Psi_{m}=\varphi(m)/2$. 
\end{remark}
\section{Proofs of main results}\label{S:main}
In this section, we prove our main results, namely Theorem~\ref{T:CK} and Theorem~\ref{T:deg}. 
\begin{proof}[Proof of Theorem~\ref{T:CK}]
Let $\alpha=m\pi/n$, where $m,n\in \mathbb{Z}$ with $n>0$ and let 
\[X_n:=\{2\cos(j\pi/n) \mid j\in \mathbb{Z}\}.\]
Suppose that $\gamma:=2\cos(\alpha)\in K$. Then we have
\[f(\gamma)=4\cos^2(\alpha)-2=2\cos(2\alpha)\in X_n.\]
It follows that $\mathcal{O}_f(\gamma)=\{2\cos(2^k\alpha)\mid k\ge 0\}\subseteq X_n.$ By periodicity of cosine, the set $X_n$ is finite, so $\mathcal{O}_f(\gamma)$ must also be finite. Hence $\gamma\in \mathrm{PrePer}(f,K).$

Conversely, let $\delta\in \mathrm{PrePer}(f,K).$ Then there exists $k\in\mathbb{N}$ for which $\beta:=f^{(k)}(\delta)$ is periodic. We first show that $|\delta|\le 2$. Assume to the contrary that $|\delta|>2$. Then it is easy to see that $f^{(j)}(\delta)>2$ for all $j\ge 1$. Moreover, we have 
\[f^{(j+1)}(\delta)-f^{(j)}(\delta)=(f^{(j)}(\delta)-2)(f^{(j)}(\delta)+1)>0,\]
so the sequence $\{f^{(j)}(\delta)\}_{j=1}^\infty$ is strictly increasing. Hence $f^{(j)}(\delta)$ is not periodic for any $j\in \mathbb{N}$, a contradiction. Therefore, we may conclude by continuity of cosine that $\delta=2\cos(r\pi)$ for some $r\in\mathbb{R}$. Since $\beta$ is periodic, there exists $l\in \mathbb{N}$ such that
\[2\cos(2^kr\pi)=f^{(k)}(\delta)=\beta=f^{(l)}(\beta)=f^{(k+l)}(\delta)=2\cos(2^{k+l}r\pi).\]
Thus $2^kr\pi =\pm 2^{k+l}r\pi+2t\pi$ for some $t\in\mathbb{Z}.$ It is clear from this equation that $r$ is rational, so $\delta\in C(K)$ as desired.
\end{proof}
\begin{remark} In fact, Theorem~\ref{T:CK} holds for any \textit{Chebyshev polynomial} $P_n$ defined by 
\begin{align*}
&P_1(x)=x, \quad P_2(x)=x^2-2, \text{ and }\\
&P_{m+1}(x)=xP_m(x)-P_{m-1}(x), \text{ for } m\ge 2.
\end{align*}
A proof of this more general result can be found in \cite[Prop.~2.2.2]{IT}. Since our proof is quite simple and should be accessible to more general audience, we decide to include it here rather than solely referring to the result above.
\end{remark}
It is a well-known fact that if $r$ is rational, then $2\cos(r\pi)$ is an algebraic integer. Proofs of this result using theory of algebraic numbers can be found in \cite[Thm.~3.9]{Niven} and \cite{Jahnel}. On the other hand, we shall apply Theorem~\ref{T:CK} to prove this fact, without resorting to any advanced machinery in algebraic number theory.
\begin{corollary}
Let $K$ be a number field. Then $C(K)\subseteq \mathcal{O}_K$, where $\mathcal{O}_K$ denotes the ring of algebraic integers of $K$.
\end{corollary}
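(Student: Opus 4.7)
The plan is to reduce the statement to an elementary observation about preperiodic points of monic integer polynomials, via Theorem~\ref{T:CK}. Given any $\alpha \in C(K)$, Theorem~\ref{T:CK} yields $\alpha \in \mathrm{PrePer}(f,K)$ where $f(x)=x^2-2$. By the definition of preperiodicity recalled in Section~\ref{S:nab}, the forward orbit $\mathcal{O}_f(\alpha)$ is a finite set, so there must exist integers $0 \le m < n$ with $f^{(m)}(\alpha) = f^{(n)}(\alpha)$. Consequently $\alpha$ is a zero of the polynomial
\[
P(x) \;:=\; f^{(n)}(x) - f^{(m)}(x).
\]

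Next, I would verify that $P(x)$ is a monic polynomial with integer coefficients. A straightforward induction on $k$ shows that each iterate $f^{(k)}(x)$ lies in $\mathbb{Z}[x]$, is monic, and has degree $2^k$, since $f(x)=x^2-2$ has these properties and the class of monic integer polynomials is closed under composition. Because $n > m$, the leading term of $P(x)$ is $x^{2^n}$, so $P(x) \in \mathbb{Z}[x]$ is monic. Thus $\alpha$ is a root of a monic polynomial with integer coefficients, which by definition means $\alpha \in \mathcal{O}_K$.

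There is essentially no serious obstacle here: the entire content of the corollary is concentrated in Theorem~\ref{T:CK}, and the remaining step is the general principle that the preperiodic points over $K$ of any monic polynomial in $\mathbb{Z}[x]$ are automatically algebraic integers. The only minor point requiring attention is to ensure that the cancellation in $f^{(n)}(x) - f^{(m)}(x)$ does not kill the top-degree term, but this is immediate from the strict inequality $2^n > 2^m$.
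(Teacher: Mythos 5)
Your proof is correct and follows essentially the same route as the paper: both pass through Theorem~\ref{T:CK} to get preperiodicity, extract an identity $f^{(n)}(\alpha)=f^{(m)}(\alpha)$ with $n>m$, and observe that $f^{(n)}(x)-f^{(m)}(x)$ is a monic integer polynomial annihilating $\alpha$. The only difference is that you spell out the induction showing the iterates are monic in $\mathbb{Z}[x]$ of degree $2^k$, which the paper leaves as ``obvious.''
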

\begin{proof}
Let $\gamma\in C(K).$ Then by Theorem~\ref{T:CK} we have that there exists $m\in \mathbb{N}$ such that $f^{(m)}(\gamma)$ is a periodic point, where $f(x)=x^2-2.$ Hence there exists $l\in \mathbb{N}$ such that 
\[f^{(l+m)}(\gamma)=f^{(l)}(f^{(m)}(\gamma))=f^{(m)}(\gamma).\]
Let $h(x)=f^{(l+m)}(x)-f^{(m)}(x).$ Then it is obvious that $h(x)\in \Z[x]$ is monic and annihilates $\gamma$. Therefore, we have that $\gamma\in \mathcal{O}_K.$ 
\end{proof}
Using Theorem~\ref{T:Lehmer} and Theorem~\ref{T:CK}, one can easily obtain an explicit upper bound for the order of $\mathrm{PrePer}(f,K)$ in terms of $[K:\Q]$, in line with Northcott's theorem.
\begin{corollary}\label{Co:bound}
Let $K$ be a number field of degree $D$ and $f(x)=x^2-2$. Then we have
\[|\mathrm{PrePer}(f,K)| \le \sum_{n=1}^{8D^2}\varphi(n).\]
\end{corollary}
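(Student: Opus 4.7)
The plan is to combine Theorem~\ref{T:CK} with the degree formula of Theorem~\ref{T:Lehmer} and a standard lower bound on Euler's totient, in order to bound the possible ``conductors'' $n$ that can occur among elements of $\mathrm{PrePer}(f,K)$.

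First I would apply Theorem~\ref{T:CK} to write every $\gamma\in\mathrm{PrePer}(f,K)$ in the form $\gamma = 2\cos(2\pi m/n)$ with $\gcd(m,n)=1$ and $n\ge 1$ (any rational multiple of $\pi$ can be put in this form after a harmless rescaling). Then, by Theorem~\ref{T:Lehmer} together with the remark identifying $\Psi_n$ as the minimal polynomial of $2\cos(2\pi/n)$, the degree of $\gamma$ over $\mathbb{Q}$ equals $\varphi(n)/2$ whenever $n>2$. Since $\gamma\in K$, this degree must divide $D=[K:\mathbb{Q}]$, which in particular forces $\varphi(n)\le 2D$.

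Next I would invoke the elementary estimate $\varphi(n)\ge\sqrt{n/2}$ already cited in the footnote to Theorem~\ref{T:Lehmer}, which converts $\varphi(n)\le 2D$ into $n\le 8D^2$. The exceptional values $n\in\{1,2\}$, corresponding to $\gamma=\pm 2$, automatically fall inside this range. For each admissible $n$ there are at most $\varphi(n)$ integers $m\in\{0,1,\dots,n-1\}$ coprime to $n$, hence at most $\varphi(n)$ distinct values of $2\cos(2\pi m/n)$; summing over $1\le n\le 8D^2$ then yields the asserted inequality.

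There is no real obstacle here — the argument is essentially formal bookkeeping once Theorems~\ref{T:CK} and~\ref{T:Lehmer} are in hand. The stated bound is deliberately loose: it absorbs both the symmetry $2\cos(2\pi m/n) = 2\cos(2\pi(n-m)/n)$ (which would halve the count for $n>2$) and the sharper divisibility constraint $\varphi(n)\mid 2D$ (which is strictly stronger than $\varphi(n)\le 2D$) into a single summation over all $n\le 8D^2$.
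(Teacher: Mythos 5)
Your argument is correct and follows essentially the same route as the paper: both combine Theorem~\ref{T:CK} with Theorem~\ref{T:Lehmer} to constrain the admissible denominators $n$ via $\varphi(n)\le 2D$, then use the elementary bound $\varphi(n)\ge\sqrt{n/2}$ to get $n\le 8D^2$ and count at most $\varphi(n)$ numerators for each $n$. The only cosmetic difference is that you explicitly dispose of the cases $n\in\{1,2\}$ excluded from Lehmer's theorem, which the paper leaves implicit.
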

\begin{proof}
By Theorem~\ref{T:Lehmer} and Theorem~\ref{T:CK}, we have
\[\mathrm{PrePer}(f,K)\subseteq \left\{2\cos\left(2\pi \frac{m}{n}\right): n\in\N, \varphi(n)\mid 2D, 1\le m<n, (m,n)=1\right\}.\]
If $\varphi(n)\mid 2D$, then by a trivial lower bound for $\varphi(n)$, we have $\sqrt{n/2} \le \varphi(n) \le 2D$, implying $n\le 8D^2$. Hence the cardinality of the set on the right-hand side is at most $\displaystyle\sum_{n=1}^{8D^2}\varphi(n)$.
\end{proof}
\begin{remark}
 The summation in Corollary~\ref{Co:bound} can be written as a value of the \textit{totient summatory function} $\Phi(m):=\displaystyle\sum_{n=1}^{m}\varphi(n)$, which satisfies an asymptotic formula
 \[\Phi(m)\sim \frac{3m^2}{\pi^2}+O(m \log m).\] One can modify a proof of this formula to obtain a simpler upper bound for $|\mathrm{PrePer}(f,K)|$. Note, however, that this bound is very far from optimal. One way to improve it is to use a stronger lower bound for $\varphi(n)$. For instance, it is known from \cite[Thm.~8.8.7]{BS} that for $n>2$
 \[\varphi(n)>\frac{n}{e^\gamma \log\log n+\frac{3}{\log\log n}},\]
 where $\gamma$ is the Euler's constant.
\end{remark}
To prove Theorem~\ref{T:deg}, we need the following auxiliary results about divisors of $2^l\pm 1$, where $l$ is a prime power. Recall that for a prime $p$ and a nonzero integer $s$ the {\it $p$-adic valuation of $s$}, denoted by $v_p(s)$, is the exponent of the highest power of $p$ that divides $s$. 
\begin{lemma}\label{L:divisor}
Let $k\in\N$.
\begin{itemize}
\item[(i)] If $k>1$ and $q$ is a prime divisor of $2^{2^k}+1$, then $q=2^{k+2}m+1$ for some $m\in \N$.
\item[(ii)] For any odd prime $p$, if $q$ is a prime divisor of $2^{p^k}-1$ (resp. $2^{p^k}+1$) and $q \nmid 2^{p^{k-1}}-1$ (resp. $q \nmid 2^{p^{k-1}}+1$), then $q=2p^km+1$ for some $m\in \N$.
\item[(iii)] For any odd prime $p$,
\begin{align}
v_3(2^{p^k}+1)&=\begin{cases}
k+1 &\text{ if } p=3,\\
1   &\text{ if } p>3,
\end{cases}\label{E:v3p}\\
v_3(2^{p^k}-1)&=0. \label{E:v3m}
\end{align}
\end{itemize}
\end{lemma}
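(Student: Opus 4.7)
The plan is to handle the three parts via multiplicative-order computations modulo $q$ (for parts (i) and (ii)) together with one application of the Lifting the Exponent lemma (for part (iii)).

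For part (i), I would start from the congruence $2^{2^k} \equiv -1 \pmod q$, which gives $2^{2^{k+1}} \equiv 1 \pmod q$ and forces the multiplicative order of $2$ modulo $q$ to be exactly $2^{k+1}$. Fermat's little theorem then yields $2^{k+1} \mid q-1$. The hypothesis $k > 1$ implies $2^{k+1} \geq 8$, so $q \equiv 1 \pmod 8$, and by the second supplement to quadratic reciprocity $2$ is a square modulo $q$. Writing $2 \equiv a^2 \pmod q$, a direct check shows that $a^{2^{k+2}} \equiv 1$ while $a^{2^{k+1}} \equiv -1$, so the order of $a$ equals $2^{k+2}$, giving $2^{k+2} \mid q-1$ as required.

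For part (ii), the same order-based strategy applies. In the minus case, the hypotheses force the order of $2$ modulo $q$ to divide $p^k$ but not $p^{k-1}$, and since $p$ is prime the order must equal $p^k$. In the plus case, $2^{2p^k} \equiv 1 \pmod q$ and $2^{p^k} \equiv -1 \pmod q$, so the order has the form $2p^j$ with $0 \leq j \leq k$; if $j < k$, then $(2^{p^{k-1}})^2 \equiv 1 \pmod q$, forcing $2^{p^{k-1}} \equiv \pm 1$, either of which contradicts the setup (the $+1$ case contradicts the minimality of the order, while the $-1$ case contradicts the hypothesis $q \nmid 2^{p^{k-1}}+1$). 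Hence the order is $2p^k$. Since $q$ is odd in both cases, $q-1$ is even, so combining with $p^k \mid q-1$ yields $2p^k \mid q-1$.

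For part (iii), the equality $v_3(2^{p^k}-1) = 0$ is immediate: $2 \equiv -1 \pmod 3$ and $p^k$ is odd, so $2^{p^k} \equiv -1 \not\equiv 1 \pmod 3$. For $v_3(2^{p^k}+1)$, the Lifting the Exponent lemma applied to $a=2$, $b=1$ (using that $3 \mid a+b$, $3 \nmid ab$, and $p^k$ is odd) gives $v_3(2^{p^k}+1) = v_3(2+1) + v_3(p^k) = 1 + v_3(p^k)$, which equals $k+1$ when $p=3$ and $1$ when $p>3$. The principal obstacle is the jump from $2^{k+1} \mid q-1$ to $2^{k+2} \mid q-1$ in part (i); this is precisely where the hypothesis $k>1$ is needed, and the quadratic-reciprocity argument via a square root of $2$ is the natural tool for carrying it out.
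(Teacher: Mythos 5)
Your proof is correct and follows essentially the same route as the paper: parts (ii) and (iii) are handled by exactly the same multiplicative-order argument and the same application of the lifting-the-exponent lemma. The only difference is in part (i), where the paper simply cites the Euler--Lucas theorem on prime divisors of Fermat numbers, whereas you supply its standard proof (passing from $2^{k+1}\mid q-1$ to $2^{k+2}\mid q-1$ via a square root of $2$ modulo $q$); this is a valid, self-contained substitute for the citation.
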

\begin{proof}
(i) This assertion is known as the Euler-Lucas theorem \cite[Thm.~1.3.5]{CP}, which gives an explicit form of the prime divisors of Fermat numbers. 

(ii) Recall that for a positive integer $n$ and an integer $a$ which is relatively prime to $n$, the {\it order of $a$ modulo $n$}, denoted by $\ord_n a$, is the smallest positive integer $r$ such that 
\[a^r \equiv 1 \pmod n.\]
Indeed, for any $s\in \N$, if $a^s\equiv 1 \pmod n$, then $\ord_n a \mid s$. Let $p$ be an odd prime and let $q$ be a prime divisor of $2^{p^k}-1$, where $q\nmid 2^{p^{k-1}}-1$. Then it follows immediately that $\ord_q 2=p^k.$ Since $q$ is odd, we have by Fermat's little theorem that $2^{q-1}\equiv 1 \pmod q$, implying $p^k \mid q-1$. Moreover, since $2\mid q-1$ and $q>1$, we have $q=2p^km+1$ for some integer $m\ge 1$, as desired.

Next, assume that $q\mid 2^{p^k}+1$, but $q\nmid 2^{p^{k-1}}+1$. Obviously, $3\mid 2^l+1$ for every $l\in \N$, so $q>3$. Observe that 
\[2^{2p^k}=\left(2^{p^k}\right)^2\equiv (-1)^2=1 \pmod q,\]
so $\ord_q 2\mid 2p^k$. Since $q\ne 3$, one sees that $\ord_q 2\ne 2$. Moreover, since $2^{p^{k-1}}\not\equiv \pm 1 \pmod q$, we have that $2^{2p^{k-1}}\not\equiv 1 \pmod q$. Therefore, we can conclude that $\ord_q 2= 2p^k.$ Again, in consequence of Fermat's little theorem, we can write $q=2p^km+1$ for some $m\in \N$.

(iii) Applying the lifting-the-exponent lemma \cite[Thm.~6.2]{BR}, one sees immediately that \eqref{E:v3p} holds. In addition, since $3\mid 2^{p^k}+1$, we have $2^{p^k}-1 \equiv -2 \mod 3$, which yields \eqref{E:v3m}.
\end{proof}
\begin{lemma} \label{L:ppower}
Let $t=p^k$, where $p$ is prime and $k\in\N$. Then the degree of each irreducible factor of $\Phi_{t,f}(x)$ over $\Q$ is a multiple of $t$.
\end{lemma}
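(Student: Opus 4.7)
The plan is to use Theorem~\ref{T:phi} to write $\Phi_{p^k,f}(x)$ explicitly as a quotient of products of the polynomials $\Psi_m$, and then analyze which $\Psi_m$ survive. Since $\mu(p^k/d)$ vanishes unless $d\in\{p^{k-1},p^k\}$, with the convention $p^0=1$ in the case $k=1$, the formula collapses to
\[\Phi_{p^k,f}(x)=\frac{\displaystyle\prod_{d_1\mid 2^{p^k}-1}\Psi_{d_1}(x)\prod_{d_2\mid 2^{p^k}+1}\Psi_{d_2}(x)}{\displaystyle\prod_{d_1\mid 2^{p^{k-1}}-1}\Psi_{d_1}(x)\prod_{d_2\mid 2^{p^{k-1}}+1}\Psi_{d_2}(x)},\]
and a quick congruence check ($2^{p^{k-1}}\equiv 1\Rightarrow 2^{p^k}\equiv 1$, and likewise for $-1$, with separate handling of $p=2$ versus $p$ odd) shows that the denominator divides the numerator.

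Since $\Psi_1^2=x-2$, $\Psi_2^2=x+2$, and $\Psi_m$ is irreducible of degree $\varphi(m)/2$ for $m>2$, the irreducible factors of $\Phi_{p^k,f}(x)$ over $\QQ$ are $\Psi_m$'s for certain $m\ge 3$: the index $m=1$ cancels completely and $m=2$ never appears because $2^{p^k}\pm 1$ are odd. Using $\gcd(2^{p^k}-1,2^{p^k}+1)=1$, $\Psi_m$ appears with positive exponent in the quotient precisely when
\[m\mid 2^{p^k}-1\ \text{or}\ m\mid 2^{p^k}+1,\qquad m\nmid 2^{p^{k-1}}-1\ \text{and}\ m\nmid 2^{p^{k-1}}+1.\]
It therefore suffices to prove that $p^k\mid\varphi(m)/2$ for every such $m$.

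For this I will compute $n_0:=\ord_m 2$, the multiplicative order of $2$ modulo $m$. If $m\mid 2^{p^k}-1$, then $n_0\mid p^k$ and $n_0\nmid p^{k-1}$, so $n_0=p^k$. If $m\mid 2^{p^k}+1$, then $n_0\mid 2p^k$ and $n_0\nmid p^k$, so $n_0=2p^j$ for some $0\le j\le k$; any $j<k$ would give $2^{p^j}\equiv -1\pmod m$ and hence $2^{p^{k-1}}\equiv(-1)^{p^{k-1-j}}\pmod m$, which together with the hypothesis on $m$ rules out $j<k$ and forces $n_0=2p^k$. In each case $n_0\mid\varphi(m)$ by Euler's theorem.

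The last step is to promote $p^k\mid n_0\mid\varphi(m)$ to $p^k\mid\varphi(m)/2$. For $p$ odd this is automatic, since $\varphi(m)$ is even (as $m\ge 3$) and $\gcd(p^k,2)=1$, so $2p^k\mid\varphi(m)$. The one case requiring extra care, and which I expect to be the main obstacle, is $p=2$ with $m\mid 2^{2^k}-1$: here $n_0=2^k$ already absorbs all the relevant powers of $2$ and I must manufacture an additional factor of $2$ inside $\varphi(m)$. My plan is to show $-1\notin\langle 2\rangle\subseteq(\ZZ/m\ZZ)^\times$; otherwise $2^j\equiv -1\pmod m$ for some $0<j<2^k$, and combined with $\ord_m 2=2^k$ this forces $j=2^{k-1}$ and hence $m\mid 2^{2^{k-1}}+1$, contradicting the defining hypothesis on $m$. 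Consequently $\langle 2,-1\rangle$ has order $2^{k+1}$ in $(\ZZ/m\ZZ)^\times$, which divides $\varphi(m)$, supplying the missing factor of $2$ and completing the proof.
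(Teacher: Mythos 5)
Your argument is correct, and it takes a genuinely different route from the paper's, within the same overall framework. Both proofs begin with Theorem~\ref{T:phi} and the M\"obius collapse to the quotient over $d\in\{p^{k-1},p^k\}$, and both ultimately rest on an order-of-$2$ computation; but the paper works prime-by-prime inside each surviving index $m$: it uses the coprimality (or near-coprimality) of the factors in $2^{p^k}\mp1=(2^{p^{k-1}}\mp1)\,m_i$ to locate a prime $q\mid m$ with $q\nmid 2^{p^{k-1}}\mp1$, then applies Lemma~\ref{L:divisor} to write $q=2p^k m'+1$, so that $p^k\mid (q-1)/2\mid \varphi(m)/2$. That strategy forces the three-way split $p=2$, $p=3$, $p>3$ (the $p=3$ case needs lifting-the-exponent because $\gcd(2^{3^{k-1}}+1,(2^{3^{k-1}})^2-2^{3^{k-1}}+1)=3$ spoils coprimality, and the $p=2$ case needs the Euler--Lucas theorem on Fermat numbers). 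You instead compute $\ord_m 2$ for the whole modulus, obtaining $p^k$ or $2p^k$ directly from the defining divisibility conditions on $m$, and you extract the last factor of $2$ via Lagrange applied to $\langle 2,-1\rangle$ in the single case ($p=2$, $m\mid 2^{2^k}-1$) where $\ord_m 2\mid\varphi(m)$ is not already enough. This is uniform in $p$ and dispenses with Lemma~\ref{L:divisor}, Euler--Lucas, and lifting-the-exponent entirely; what it costs is nothing beyond Euler's theorem and a subgroup-order count. Two points to make explicit in a final write-up: when ruling out $n_0=2p^j$ with $j<k$, the step $2^{p^j}\equiv-1\pmod m$ deserves its one-line justification (the element $2^{p^j}$ has order exactly $2$ and $(2^{p^j})^{p^{k-j}}=2^{p^k}\equiv-1$ with $p^{k-j}$ odd; for $p=2$ this subcase never arises, since $2^{j+1}\mid 2^k$ already contradicts $n_0\nmid p^k$); and the surviving indices satisfy $m\ge 3$, so $\varphi(m)$ is even and $-1\not\equiv 1\pmod m$, facts your parity and subgroup arguments quietly rely on.
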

\begin{proof}
We divide the proof into three cases, in accordance with the value of $p$.

{\it Case $p=2$.}

We have from Theorem~\ref{T:phi} that  
\begin{align*}
\Phi_{t,f}(x)&=\prod_{r=0}^k\left(\prod_{d_1\mid 2^{2^r}-1}\Psi_{d_1}(x)\prod_{d_2\mid 2^{2^r}+1}\Psi_{d_2}(x)\right)^{\mu\left(2^{k-r}\right)}\\
&= \prod_{d_1\mid 2^{2^k}-1}\Psi_{d_1}(x)\prod_{d_2\mid 2^{2^k}+1}\Psi_{d_2}(x) \prod_{d_1\mid 2^{2^{k-1}}-1}\Psi_{d_1}(x)^{-1}\prod_{d_2\mid 2^{2^{k-1}}+1}\Psi_{d_2}(x)^{-1}\\
&=\prod_{\substack{c_1\mid 2^{2^{k-1}}-1\\c_2\mid 2^{2^{k-1}}+1\\ c_1,c_2> 1}}\Psi_{c_1c_2}(x)\prod_{\substack{d_2\mid 2^{2^k}+1\\ d_2>1}}\Psi_{d_2}(x),
\end{align*}
where we have used the trivial factorization $2^{2^k}-1=(2^{2^{k-1}}-1)(2^{2^{k-1}}+1)$ to deduce the last equality above. If $k=1$, then $\Phi_{t,f}(x)=\Psi_5(x)$, which is a quadratic polynomial. If $k=2$, then $\Phi_{t,f}(x)=\Psi_{15}(x)\Psi_{17}(x)$, where $\deg\Psi_{15}=4$ and $\deg\Psi_{17}=8$. It remains to consider $k>2$. Let $c_1,c_2>1$ be divisors of $2^{2^{k-1}}-1$ and $2^{2^{k-1}}+1$, respectively. Since $2^{2^{k-1}}-1$ and $2^{2^{k-1}}+1$ are coprime, so are $c_1$ and $c_2$. Let $q$ be a prime divisor of $c_2$. Then there exist $l,s\in \N$ such that $c_2=lq^s$ and $\gcd(l,q)=1$. Moreover, we have from Lemma~\ref{L:divisor}(i) that $q=2^{k+1}m+1$ for some $m\in \N$. By the remark under Theorem~\ref{T:phi} and multiplicativity of $\varphi$, we have
\[\deg \Psi_{c_1c_2}=\frac{\varphi(c_1c_2)}{2}=\frac{\varphi(c_1)\varphi(c_2)}{2}=\frac{\varphi(c_1)\varphi(l)q^{s-1}(q-1)}{2}=\varphi(c_1)\varphi(l)q^{s-1}2^k m,\]
so $t=2^k\mid \deg \Psi_{c_1c_2}$. It can be shown using the same argument that $2^k\mid \deg \Psi_{d_2}$ for any divisor $d_2>1$ of $2^{2^k}+1$.\\

{\it Case $p=3$.}

By Theorem~\ref{T:phi}, we have 
\begin{equation*}
\Phi_{t,f}(x)=\prod_{\substack{d_1\mid 2^{3^{k}}-1\\d_1\nmid 2^{3^{k-1}}-1}}\Psi_{d_1}(x)\prod_{\substack{d_2\mid 2^{3^k}+1\\d_2\nmid 2^{3^{k-1}}+1}}\Psi_{d_2}(x).
\end{equation*}
Let $d_1$ be a positive divisor of $2^{3^k}-1$, where $d_1\nmid 2^{3^{k-1}}-1$. 
Since \[2^{3^k}-1 = (2^{3^{k-1}}-1)((2^{3^{k-1}})^2+2^{3^{k-1}}+1),\]
where by \eqref{E:v3m}
\[\gcd(2^{3^{k-1}}-1,(2^{3^{k-1}})^2+2^{3^{k-1}}+1)=\gcd(2^{3^{k-1}}-1,2^{3^{k-1}}+2)=\gcd(2^{3^{k-1}}-1,3)=1,\]
there exists a prime divisor $q$ of $d_1$ such that $q \nmid 2^{3^{k-1}}-1.$ Let $s=v_q(d_1)$ and $l=d_1/q^s$. 
By Lemma~\ref{L:divisor}(ii), we have $q=2(3^km)+1$ for some $m\in\N$, whence
\[\deg \Psi_{d_1}=\frac{\varphi(d_1)}{2}=\frac{\varphi(l)q^{s-1}(q-1)}{2}=\varphi(l)q^{s-1}3^k m.\]
Now let $d_2$ be a positive divisor of $2^{3^k}+1$, where $d_2\nmid 2^{3^{k-1}}+1$. Observe that 
\[2^{3^k}+1 = (2^{3^{k-1}}+1)((2^{3^{k-1}})^2-2^{3^{k-1}}+1),\]
where 
\[\gcd(2^{3^{k-1}}+1,(2^{3^{k-1}})^2-2^{3^{k-1}}+1)=\gcd(2^{3^{k-1}}+1,2^{3^{k-1}}-2)=\gcd(2^{3^{k-1}}+1,3)=3.\]
If $d_2$ is a power of $3$, then we have from \eqref{E:v3p} that $d_2=3^{k+1}$, in which case
\[\deg\Psi_{d_2}=\frac{\varphi(d_2)}{2}=3^k.\]
Otherwise, $d_2$ has a prime divisor $q>3$ such that $q\nmid 2^{3^{k-1}}+1$, so we can again employ Lemma~\ref{L:divisor}(ii) to deduce that $3^k \mid \deg \Psi_{d_2}$.\\

{\it Case $p>3$.}

By Theorem~\ref{T:phi}, we have 
\begin{equation*}
\Phi_{t,f}(x)=\prod_{\substack{d_1\mid 2^{p^{k}}-1\\d_1\nmid 2^{p^{k-1}}-1}}\Psi_{d_1}(x)\prod_{\substack{d_2\mid 2^{p^k}+1\\d_2\nmid 2^{p^{k-1}}+1}}\Psi_{d_2}(x).
\end{equation*}
Simple calculations yield
\begin{equation*}
2^{p^k}-1 = (2^{p^{k-1}}-1)m_1,\quad 2^{p^k}+1 = (2^{p^{k-1}}+1)m_2,
\end{equation*}
where $m_1>1$, $m_2>1$, and $\gcd(2^{p^{k-1}}-1,m_1)=\gcd(2^{p^{k-1}}+1,m_2)=1$.
Hence, for any $d_1$ and $d_2$ in the product above, there exist prime divisors $q_1$ and $q_2$ of $d_1$ and $d_2$ such that $q_1\nmid 2^{p^{k-1}}-1$, and $q_2\nmid 2^{p^{k-1}}+1.$ Then, with the aid of Lemma~\ref{L:divisor}(ii), it can be shown using arguments in the same vein as those in the previous cases that $p^k \mid \deg\Psi_{d_1}$ and $p^k \mid \deg\Psi_{d_2}.$
\end{proof}

We are now in a good position to prove Theorem~\ref{T:deg}.
\begin{proof}[Proof of Theorem~\ref{T:deg}]

The case $n=1$ is trivial, so we may assume that $n>1$. By the fundamental theorem of arithmetic, we can write $n$ as
$n= p_1^{a_1}\cdots p_r^{a_r},$
where $r,a_1,\ldots,a_r\in\N$ and $p_1,\ldots,p_r$ are distinct primes. For each $1\le i\le r$, let $n_i=n/p_i^{a_i}$ and $\beta_i=f^{(n_i)}(\alpha)\in \Q(\alpha)\subseteq K.$ Then it follows that $\beta_i$ is a periodic point of $f$ with minimal period $p_i^{a_i}.$ Recall that the periodic points of $f$ with minimal period $l$ in $\overline{\Q}$ are zeros of $\Phi_{l,f}(x)$. Hence $\beta_i$ must be a root of an irreducible factor $\tau_i(x)$ of $\Phi_{p_i^{a_i},f}(x)$. By Lemma~\ref{L:ppower}, we have, for every $1\le i \le r$,
\[ p_i^{a_i} \mid \deg \tau_i=[\Q(\beta_i):\Q].\]
Moreover, since $[\Q(\beta_i):\Q] \mid [K:\Q]$ (see the diagram below), it follows that $n=\lcm(p_1^{a_1},\ldots,p_r^{a_r}) \mid [K:\Q],$ as desired.
\begin{center}
\begin{tikzpicture}

    \node (Q1) at (0,0) {$\Q$};
    \node (Q2) at (-3,2) {$\Q(\beta_1)$};
    \node (Q3) at (-1,2) {$\Q(\beta_2)$};
    \node (Q4) at (1,2) {$\cdots$};
    \node (Q5) at (3,2) {$\Q(\beta_r)$};
    \node (Q6) at (0,4) {$\Q(\alpha)$};
    \node (Q7) at (0,5) {$K$};

    \draw (Q1)--(Q2);
    \draw (Q1)--(Q3);
    \draw (Q1)--(Q4);
    \draw (Q1)--(Q5);
    \draw (Q6)--(Q2);
    \draw (Q6)--(Q3);
    \draw (Q6)--(Q4);
    \draw (Q6)--(Q5);
    \draw (Q6)--(Q7);

    \end{tikzpicture}
\end{center}
\end{proof}
\section{Examples in number fields of low degree}\label{S:ex}
In this section, we apply our main results in classifying all values of cosine at a rational multiple of $\pi$ which belong to a number field $K$ of degree $1\le D\le 5$. We start by factoring the polynomial $f^{(D)}(x)-x$. By Theorem~\ref{T:deg}, the periodic points of $f$ in $K$ can be obtained from zeros of irreducible factors of $f^{(D)}(x)-x$ which have degree at most $D$. All the preperiodic points of $f$ in $K$ can then be computed by taking preimages of these values under $f$, which can be done in a finite number of steps.
\begin{example}
	For $D = 1$, we have $K = \mathbb{Q}$ and the periodic points must be fixed points of $f$. Since $f(x) - x = x^2 - x - 2 = (x + 1)(x - 2)$, we have the following digraphs representing all rational preperiodic points of $f$:
	\begin{center}
		\begin{tikzpicture}[->]
		
		\node (T2) at (0,0) {$-2$};
		\node (T1) at (-2,0) {$0$};
		\node (T3) at (2,0) {$2$};
		\node (B1) at (0,-1) {$1$};
		\node (B2) at (2,-1) {$-1$};

		\path (T1) edge        (T2);
		\path (T2) edge        (T3);
		\path (T3) edge [loop right]       (T3);
		\path (B1) edge        (B2);
		\path (B2) edge [loop right]       (B2);
		\end{tikzpicture}.
	\end{center}
Here $a\rightarrow b$ means $f(a)=b$. In other words, we have $\mathrm{PrePer}(f,\Q)=\{0, \pm 1, \pm 2\}$, which is equivalent to Niven's theorem.
\end{example}

\begin{example}
For $D = 2$, we have that $K$ is a quadratic number field; i.e., $K = \mathbb{Q}(\sqrt{m})$ for some square-free integer $m$. Note that $f^{(2)}(x) - x = x^4 - 4x^2 - x + 2 = (x - 2)(x + 1)(x^2 + x - 1)$. Hence the preperiodic points of $f$ in $K$ can be seen from the following digraphs:

\begin{center}
	\begin{tikzpicture}[->]
	
	\node (T2) at (0,0) {$0$};
	\node (T1) at (-2,0) {$\pm \sqrt{2}$};
	\node (T3) at (2,0) {$-2$};
	\node (T4) at (4, 0)
	{$2$};
	\node (M1) at (0,-1) {$\pm \sqrt{3}$};
	\node (M2) at (2,-1) {$1$};
	\node (M3) at (4, -1) 
	{$-1$};
	\node (B1) at (-2, -2)
	{$\frac{1 + \sqrt{5}}{2}$};
	\node (B2) at (0, -2)
	{$\frac{-1 + \sqrt{5}}{2}$};
	\node (B3) at (2, -2)
	{$\frac{-1-\sqrt{5}}{2}$};
	\node (B4) at (4, -2)
	{$\frac{1-\sqrt{5}}{2}$};

	\path (T1) edge        (T2);
	\path (T2) edge        (T3);
	\path (T3) edge        (T4);
	\path (T4) edge [loop right]       (T4);
	\path (M1) edge        (M2);
	\path (M2) edge        (M3);
	\path (M3) edge [loop right]       (M3);
	\path (B1) edge        (B2);
	\path (B2) edge [bend right]       (B3);
	\path (B3) edge [bend right]       (B2);
	\path (B4) edge        (B3);
	\end{tikzpicture}.
\end{center}
This immediately gives Theorem~\ref{T:quadratic}.
\end{example}

\begin{example}
	For $D = 3$, we have $f^{(3)}(x) - x = (x - 2)(x + 1)(x^3 - 3x + 1)(x^3 + x^2 - 2x - 1)$. Suppose that $\alpha_1 < \alpha_2 < \alpha_3$ and $\beta_1 < \beta_2 < \beta_3$ are all roots of the third and the fourth factors respectively. Then we have the following digraphs representing all preperiodic points in cubic fields. 
	\begin{center}
		\begin{tikzpicture}[->]
		\node (A1) at (-1, 0) 
		{$-\alpha_2$};
		\node (A2) at (-1, 2.5) 
		{$-\alpha_1$};
		\node (A3) at (1.5, 2)
		{$-\alpha_3$};
		\node (B1) at (-1, 1)
		{$\alpha_1$};
		\node (B2) at (0, 2)
		{$\alpha_3$};
		\node (B3) at (1, 1)
		{$\alpha_2$};
		\node (B4) at (6, 2)
		{$0$};
		\node (B5) at (8, 2)
		{$-2$};
		\node (B6) at (10, 2)
		{$2$};
		\node (C1) at (-1, -4)
		{$-\beta_2$};
		\node (C2) at (-1, -1.5)
		{$-\beta_1$};
		\node (C3) at (1.5, -2)
		{$-\beta_3$};
		
		\node (D1) at (-1, -3)
		{$\beta_1$};
		\node (D2) at (0, -2)
		{$\beta_3$};
		\node (D3) at (1, -3)
		{$\beta_2$};
		\node (D4) at (8, -2)
		{$1$};
		\node (D5) at (10, -2)
		{$-1$};
		
		\path (A1) edge [bend left]       (B1);
		\path (A2) edge [bend left]		(B2);
		\path (A3) edge	[bend left]		(B3);
		\path (B1) edge	[bend left]		(B2);
		\path (B2) edge [bend left]		(B3);
		\path (B3) edge [bend left]        (B1);
		\path (B4) edge 		(B5);
		\path (B5) edge			(B6);
		\path (B6) edge	[loop right]		(B6);
		\path (C1) edge	[bend left]		(D1);
		\path (C2) edge	[bend left]		(D2);
		\path (C3) edge	[bend left]		(D3);
		\path (D1) edge	[bend left]	(D2);
		\path (D2) edge 	[bend left]	(D3);
		\path (D3) edge [bend left]        (D1);
		\path (D4) edge			(D5);
		\path (D5) edge [loop right]       (D5);
		\end{tikzpicture}
	\end{center}
Therefore, $\{\pm \alpha_1, \pm \alpha_2, \pm \alpha_3, \pm \beta_1, \pm \beta_2, \pm \beta_3\}$ is the set of all cubic irrational values of $2\cos (r \pi),$ where $r\in \Q$.
\end{example}

\begin{example}
For $D = 4$, we have $f^{(4)}(x) - x = (x - 2)(x + 1)(x^2 + x - 1)(x^4 - x^3 - 4x^2 + 4x + 1)(x^8 + x^7 - 7x^6 - 6x^5 + 15x^4 + 10x^3 - 10x^2 - 4x + 1)$.  Let $\alpha_1<\alpha_2<\alpha_3<\alpha_4$ be the roots of the quartic factor of $f^{(4)}(x) - x$. Then we have the following digraphs:

\begin{center}
	\begin{tikzpicture}[->]
	\node (AA1) at (-1.5, 5.5) {-$\alpha_1$};
	\node (A1) at (-2, 4) {$\alpha_1$};
	\node (A2) at (0, 4) {$\alpha_4$};
	\node (B1) at (-3.5, 2.5) {$-\alpha_2$};
	\node (B2) at (1.5, 3.5) {$-\alpha_4$};
	\node (C1) at (-2, 2) {$\alpha_2$};
	\node (C2) at (0, 2) {$\alpha_3$};
	\node (D1) at (-.5, .5) {-$\alpha_3$};
	
	\path (AA1) edge [bend left] 		(A2);
	\path (A1) edge [bend left]			(A2);
	\path (A2) edge [bend left]			(C2);
	\path (C2) edge [bend left]			(C1);
	\path (C1) edge [bend left]			(A1);
	\path (B1) edge [bend left]			(A1);
	\path (B2) edge [bend left]			(C2);
	\path (D1) edge [bend left]			(C1);
	\end{tikzpicture}
\end{center}

\begin{center}
	\begin{tikzpicture}[->]
	\node (A1) at (-6, 1) {$\pm \sqrt{2 + \sqrt{2}}$};
	\node (A2) at (-2, 1) {$\sqrt{2}$};
	\node (B1) at (0, 0) {$0$};
	\node (B2) at (2, 0) {$-2$};
	\node (B3) at (4, 0) {$2$};
	\node (C1) at (-6, -1) {$\pm \sqrt{2 - \sqrt{2}}$};
	\node (C2) at (-2, -1) {-$\sqrt{2}$};
	
	\path (A1) edge (A2) ;
	\path (A2) edge (B1) ;
	\path (B1) edge (B2) ;
	\path (B2) edge (B3) ;
	\path (B3) edge [loop right] (B3);
	\path (C1) edge (C2) ;
	\path (C2) edge (B1);
	\end{tikzpicture}
\end{center}	

\begin{center}
	\begin{tikzpicture}[->]
	\node (A1) at (-6, 1) {$\pm \sqrt{2 + \sqrt{3}}$};
	\node (A2) at (-2, 1) {$\sqrt{3}$};
	\node (B1) at (0, 0) {$1$};
	\node (B2) at (2, 0) {$-1$};
	\node (C1) at (-6, -1) {$\pm \sqrt{2 - \sqrt{3}}$};
	\node (C2) at (-2, -1) {-$\sqrt{3}$};
	
	\path (A1) edge (A2) ;
	\path (A2) edge (B1) ;
	\path (B1) edge (B2) ;
	\path (B2) edge [loop right] (B2);
	\path (C1) edge (C2) ;
	\path (C2) edge (B1);
	\end{tikzpicture}
\end{center}

\begin{center}
	\begin{tikzpicture}[->]
	\node (A1) at (-2, 2) {$\pm \sqrt{\frac{5 + \sqrt{5}}{2}}$};
	\node (A2) at (4, 2) {$\pm \sqrt{\frac{5 - \sqrt{5}}{2}}$};
	\node (B1) at (-2, 0) {$\frac{1 + \sqrt{5}}{2}$};
	\node (B2) at (0, 0) {$\frac{-1 + \sqrt{5}}{2}$};
	\node (B3) at (2, 0) {$\frac{-1 - \sqrt{5}}{2}$};
	\node (B4) at (4, 0) {$\frac{1 - \sqrt{5}}{2}$};
	
	\path (A1) edge (B1);
	\path (A2) edge (B4);
	\path (B1) edge (B2);
	\path (B2) edge [bend right] (B3);
	\path (B3) edge [bend right] (B2);
	\path (B4) edge (B3);
	\end{tikzpicture}.
\end{center}	
Therefore, $\left\{ \pm \alpha_1, \pm \alpha_2, \pm \alpha_3, \pm \alpha_4, \pm \sqrt{2 \pm \sqrt{2}}, \pm \sqrt{2 \pm \sqrt{3}}, \pm \sqrt{\frac{5 \pm \sqrt{5}}{2}} \right\}$ is the set of all quartic irrational values of $2\cos (r \pi)$, where $r \in \mathbb{Q}$.
\end{example}

\begin{example}
For $D = 5$, we can factorize 
$$f^{(5)}(x) - x = (x - 2)(x + 1)(x^5 + x^4 - 4x^3 - 3x^2 + 3x + 1)\cdot g(x) \cdot h(x),$$
 where $g(x)$ and $h(x)$ are irreducible polynomials with $\deg g = 10$ and $\deg h = 15$. Let $\alpha_1<\alpha_2<\alpha_3<\alpha_4<\alpha_5$ be the roots of the quintic factor of $f^{(5)}(x) - x$. Then we have the following digraphs:
\begin{center}
	\begin{tikzpicture}[->]
	\node (A1) at (-2, 1.5) {$-\alpha_1$};
	\node (B1) at (0, 1.5) {$\alpha_5$};
	\node (B2) at (2, 1.5) {$-\alpha_5$};
	\node (C1) at (-2, 0) {$\alpha_1$};
	\node (C2) at (2, 0) {$\alpha_4$};
	\node (C3) at (3, -1.3) {$-\alpha_4$};
	\node (D1) at (-3, -1.3) {$-\alpha_3$};
	\node (D2) at (-1.3, -2) {$\alpha_3$};
	\node (D3) at (1.3, -2) {$\alpha_2$};
	\node (E1) at (0, -3.3) {$-\alpha_2$};
	
	\node (B3) at 	(6, 1) {$0$};
	\node (B4) at   (8, 1) {$-2$};
	\node (B5) at   (10, 1) {$2$};
	\node (C4) at   (8, -1)  {$1$};
	\node (C5) at   (10, -1) {$-1$};
	
	\path (A1) edge [bend left] (B1);
	\path (B1) edge [bend left] (C2);
	\path (C2) edge [bend left] (D3);
	\path (D3) edge [bend left] (D2);
	\path (D2) edge [bend left] (C1);
	\path (C1) edge [bend left] (B1);
	\path (B2) edge [bend left] (C2);
	\path (C3) edge [bend left] (D3);
	\path (E1) edge [bend left] (D2);
	\path (D1) edge [bend left] (C1);
	
	\path (B3) edge  (B4);
	\path (B4) edge  (B5);
	\path (B5) edge [loop right]  (B5);
	\path (C4) edge  (C5);
	\path (C5) edge [loop right]  (C5);
	\end{tikzpicture}.
\end{center}	
Therefore, $\{ \pm \alpha_1, \pm \alpha_2, \pm \alpha_3, \pm \alpha_4, \pm \alpha_5 \}$ is the set of all quintic irrational values of $2\cos (r \pi),$ where $r \in \mathbb{Q}$. 
\end{example}

\section{Closing remarks}\label{S:rem}
The dynamical properties of the map $f_c(x)=x^2+c$ have been studied extensively over the past few decades, yet many related problems still remain open. For $c=-2$, this map turns out to be closely related to a classical result in number theory, namely Niven's theorem, and its extensions. This relation can be seen directly from Theorem~\ref{T:CK}. We can then apply Theorem~\ref{T:deg} to systematically classify all preperiodic points of $f_{-2}(x)$ in any number field $K$. It should be remarked that our proof of Theorem~\ref{T:deg} relies crucially on the known result \cite{VH} about factorization of the dynatomic polynomials associated to $f_{-2}(x)$, so it should not be expected that this theorem holds for $f_{c}(x)$ in general. As a concrete example, consider $f_{-1}(x)=x^2-1$. It is clear that $0$ is a periodic point of $f_{-1}(x)$ with minimal period $2$, so $0$ is not a fixed point of $f_{-1}(x)$. To determine all values of $c\in \mathbb{Q}$ for which $f_{c}(x)$ satisfies the property in Theorem~\ref{T:deg}, one might start from those in \cite[Fig.~1]{Poonen} which correspond to digraphs containing no cycles of length greater than $1$; i.e., $c\in \{1,1/4,0,-2,-3/4,-10/9\}$. For each of these values, it is also an interesting problem to interpret the preperiodic points of $f_{c}(x)$ as special values of some function.

\section*{Acknowledgements}
The second author is supported by National Research Council of Thailand (NRCT) under the Research Grant for Mid-Career Scholar no. N41A640153.
The third author acknowledges funding by Office of the Permanent Secretary, Ministry of Higher Education, Science, Research
and Innovation (OPS MHESI), Thailand Science Research and Innovation (TSRI) and King Mongkut's University of Technology Thonburi (Grant No. RGNS 64-096).

\bibliographystyle{amsplain}
\bibliography{ref}
\end{document}